\def\UseSection{
        \numberwithin{equation}{section}
        \newtheorem{theorem}    {Theorem}[section]
        \DefineTheorems 
}
\newcommand{\black}{\black}
\numberwithin{equation}{section}
\newcommand{\bb}[1]{\mathbb{#1}}
\newcommand{\blank}[1]{}
\newcommand{\R}{\bb R}
\newcommand{\Z}{\bb Z}
\newcommand{\C}{\bb C}
\newcommand{\N}{\bb N}
\newcommand{\T}{\bb T}
\newcommand{\Wcal}   {\mathcal{W}}
\newcommand{\nnb}	{\nonumber \\}
\newcommand{\degree}{\Omega}
\def\DefineTheorems{
	\newtheorem{lemma}      [theorem] {Lemma}

	\newtheorem{prop}        [theorem] {Proposition}
	\theoremstyle{definition}

}
\title{A simple convergence proof for the lace expansion}
\author{Gordon Slade\thanks{Department of Mathematics,
     University of British Columbia,
     Vancouver, BC, Canada V6T 1Z2.
     https://orcid.org/0000-0001-9389-9497.
     E-mail: {{\tt slade@math.ubc.ca}}}}
\begin{document}

\date{\vspace{-5ex}} 

\maketitle

\begin{abstract}
We use the lace expansion to give a simple
proof that the critical two-point function  for weakly self-avoiding
walk on $\mathbb{Z}^d$ has decay $|x|^{-(d-2)}$ in dimensions $d>4$.
The proof uses elementary Fourier analysis and the Riemann--Lebesgue Lemma.
\end{abstract}

%

\section{Introduction and main result}

The lace expansion has been used to prove $|x|^{-(d-2)}$ decay for
the long-distance behaviour of critical two-point
functions in a variety of statistical mechanical models on $\Z^d$
above their upper critical
dimensions, including self-avoiding walk
for $d>4$ \cite{HHS03,Hara08,BHH19,BHK18}, percolation for $d>6$
\cite{HHS03,Hara08}, lattice trees and lattice animals for $d>8$ \cite{HHS03,Hara08},
the Ising model for $d>4$ \cite{Saka07}, and the $\varphi^4$ model
for $d>4$ \cite{Saka15,BHH19}.  For weakly self-avoiding walk and oriented
percolation in dimensions $d>4$, local central limit theorems have
also been proved \cite{BR01,ABR16,HS02}.
Related results for long-range models are proved in \cite{CS15}.

Typically, $|k|^{-2}$ behaviour
for the Fourier transform of the critical two-point function (near $k=0$) had been proved first (as in, e.g, \cite{BS85,Slad06}).  However,
this does not directly imply $|x|^{-(d-2)}$ behaviour for the inverse
Fourier transform;
see \cite[Example~1.6.2]{MS93} for a counterexample and \cite[Appendix~A]{Soka82} for
further discussion of this point.

Our purpose here is to use the lace expansion to give
a simple proof that the critical two-point function
for weakly self-avoiding walk in dimensions $d>4$
has Gaussian decay $|x|^{-(d-2)}$.
Apart from the derivation of the lace expansion, which is well documented in the literature
and not repeated here, our proof uses little more than
elementary Fourier analysis, the Riemann--Lebesgue Lemma, and
the product rule for differentiation.
Although the realm of application of our convergence proof for the lace expansion
appears to be less general than other methods,
its application to weakly self-avoiding walk
is strikingly simple and provides a new tool for problems of this genre.

To make the presentation as simple as possible,
we restrict attention to the
two-point function of the nearest-neighbour weakly self-avoiding walk.
For background we refer to \cite{MS93,Slad06}.
We follow the approach in \cite{HHS03} apart from one key ingredient which is significantly
simplified.

Let $D:\Z^d \to \R$ be given by
$D(x)=\frac{1}{2d}$ if $\|x\|_1=1$ and otherwise $D(x)=0$.  Let $D^{*n}$ denote the $n$-fold
convolution of $D$ with itself.
For $n \in \N$, let $\Wcal_n(x)$ denote the set of $n$-step walks from $0$ to $x$, i.e., the
set of $\omega = (\omega(0),\omega(1),\ldots,\omega(n))$ with each $\omega(i)\in \Z^d$,
$\omega(0)=0$, $\omega(n)=x$,
and $\|\omega(i)-\omega(i-1)\|_1=1$ for $1 \le i \le n$.
The set $\Wcal_0(x)$ consists of the zero-step walk $\omega(0)=0$ when $x=0$,
and otherwise it is the empty set.  We write $\degree = 2d$
for the degree of the nearest-neighbour graph.
The simple random walk two-point function is
defined, for $z \in [0,1/\degree]$, by
\begin{equation}
    C_z(x) = \sum_{n=0}^\infty \sum_{\omega\in \Wcal_n(x)} z^n
    =
    \sum_{n=0}^\infty (z\degree)^n D^{*n}(x).
\end{equation}
The Green function is $C_{1/\degree}(x)$.

For $\omega \in \Wcal_n(x)$ and $0 \le s < t \le n$, we define
\begin{equation}
    U_{st}(\omega) =
    \begin{cases}
        -1 & (\omega(s)=\omega(t))
        \\
        0 & (\text{otherwise}).
    \end{cases}
\end{equation}
Given $\beta \in (0,1)$, $z \ge 0$, and $x \in \Z^d$,
the \emph{weakly self-avoiding walk two-point function} is then
defined by
\begin{equation}
    G_z(x) = \sum_{n=0}^\infty \sum_{\omega\in \Wcal_n(x)} z^n \prod_{0\le s<t\le n}(1+\beta U_{st}(\omega)).
\end{equation}
The \emph{susceptibility} is defined by $\chi(z)=\sum_{x\in \Z^d} G_z(x)$.
A standard subadditivity argument implies the existence of $z_c=z_c(\beta) \ge
z_c(0)=1/\degree$ such that
$\chi(z)$ is finite if and only if $z \in [0,z_c)$; also $\chi(z) \ge z_c/(z_c-z)$
so $\chi(z_c)=\infty$ (see, e.g., \cite[Theorem~2.3]{Slad06}).
In particular, $G_z(x)$ is finite if $z\in [0,z_c)$; in fact it decays exponentially in $x$.
We will prove the following theorem.  The constant $a_d$ in the theorem is
$a_d=\frac{d\Gamma(\frac{d-2}{2})}{2 \pi^{d/2}}$.

\begin{theorem}
\label{thm:wsaw}
Let $d>4$, and let $\beta>0$ be sufficiently small.  There is a constant
$c_d = a_d (1 +O(\beta))$ such that
\begin{equation}
    G_{z_c}(x) = c_d \frac{1}{|x|^{d-2}} + o\left( \frac{1}{|x|^{d-2}} \right).
\end{equation}
For $d > 5$, the error term is improved to $o(|x|^{-(d-1)})$.
\end{theorem}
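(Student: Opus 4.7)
The plan is to work entirely in Fourier space via the lace expansion identity
\[
    \hat{G}_z(k) = \frac{1}{\hat{F}_z(k)}, \qquad \hat{F}_z(k) = 1 - z\degree \hat{D}(k) - \hat{\Pi}_z(k),
\]
which is produced by the convolution equation $G_z = \delta + (z\degree D + \Pi_z) * G_z$ derived from the lace expansion. I would first establish the standard diagrammatic bounds showing that, for $\beta$ sufficiently small and $d > 4$, $\Pi_z$ has small $\ell^1$ norm together with finite low-order $x$-moments (and more moments in higher dimensions), uniformly for $z \in [0, z_c]$; the finiteness of the simple random walk bubble in $d > 4$ is essential here. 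These bounds control $\hat{\Pi}_z$ and its derivatives on the torus. Combined with the infinite-susceptibility condition $\hat{F}_{z_c}(0) = 0$ (which pins down $z_c$) and the symmetry of $D$ and $\Pi_{z_c}$, they yield the near-origin expansion $\hat{F}_{z_c}(k) = A |k|^2 (1 + o(1))$ for a constant $A = A(\beta) = \frac{1}{2d} + O(\beta)$; hence $\hat{G}_{z_c}(k) \sim (A|k|^2)^{-1}$ as $k \to 0$, matching $c_d = a_d(1 + O(\beta))$.

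The second step is to subtract off the leading singular part by comparing with the critical simple random walk Green function $C_{1/\degree}$, whose classical asymptotic $C_{1/\degree}(x) = a_d |x|^{-(d-2)}(1 + o(1))$ is available off the shelf. Writing $\hat{G}_{z_c}(k) = K\, \hat{C}_{1/\degree}(k) + \hat{E}(k)$ with $K = 1/(2dA) = 1 + O(\beta)$ chosen so that the $|k|^{-2}$ singularities at the origin cancel exactly, one obtains $\hat{E}(k) = o(|k|^{-2})$ near $k = 0$ while $\hat{E}$ is smooth away from it. Fourier inversion then yields
\[
    G_{z_c}(x) = K\, C_{1/\degree}(x) + E(x) = c_d\, |x|^{-(d-2)} + o(|x|^{-(d-2)}) + E(x),
\]
and the theorem reduces to showing $E(x) = o(|x|^{-(d-2)})$.

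This last step is the main obstacle, because $\hat{E}$, while continuous on the torus, has derivatives that can blow up at $k = 0$. The plan is to split the Fourier inversion integral for $E(x)$ into low-frequency $\{|k| \le 1/|x|\}$ and high-frequency $\{|k| > 1/|x|\}$ regions. The low-frequency contribution is bounded directly from the $o(|k|^{-2})$ estimate on $\hat{E}$, using $\int_{|k| \le 1/|x|} |k|^{-2}\, dk \asymp |x|^{-(d-2)}$. The high-frequency contribution is treated by integration by parts in $k$, applying the product rule to differentiate $\hat{E} = 1/\hat{F}_{z_c} - K/(1 - \hat{D})$, bounding the resulting derivatives via the diagrammatic moment estimates on $\Pi_{z_c}$, and invoking the Riemann--Lebesgue lemma at the end to convert integrability into $o(1)$ as $|x| \to \infty$. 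The improvement to $o(|x|^{-(d-1)})$ for $d > 5$ is expected to follow from one additional $x$-moment of $\Pi_{z_c}$ being summable in higher dimensions, which gives one further effective differentiation in the $k$-space estimate.
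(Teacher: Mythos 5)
Your overall Fourier-space strategy---subtract a constant multiple of the critical simple random walk Green function so that the $|k|^{-2}$ singularities cancel, then convert derivative bounds on the remainder into $x$-space decay via integration by parts and the Riemann--Lebesgue lemma---is essentially the paper's proof. Indeed your remainder $\hat E = 1/\hat F_{z_c} - K/(1-\hat D)$ has numerator $(1-\hat D) - K\hat F_{z_c}$, which is exactly the paper's $\hat E_{z}$ at $z=z_c$ (with $K=\lambda_{z_c}$ and $\mu_{z_c}=1/\degree$), and your high-frequency integration by parts is the paper's appeal to the estimate $|\psi(x)| \lesssim |x|^{-s}\max_{|\alpha|\le s}\|\nabla^\alpha\hat\psi\|_{L^1(\T^d)}$ combined with Riemann--Lebesgue. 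The power counting you would need ($\nabla^\alpha\hat E$ at worst of order $|k|^{-|\alpha|}$, hence integrable for $|\alpha|\le d-2$, or $d-1$ when $d>5$) is the same as the paper's.

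The genuine gap is in your first step. You propose to ``first establish'' the diagrammatic bounds on $\Pi_z$ uniformly for $z\in[0,z_c]$ from the finiteness of the simple random walk bubble. But the diagrammatic estimates bound $\Pi_z$ by products of $G_z$ two-point functions along the diagrams, not by $C_{1/\degree}$: to obtain $|\Pi_z(x)| \le O(\beta)|x|^{-3(d-2)}$ (and you do need this full pointwise bound, since your high-frequency step requires moments of $\Pi_{z_c}$ up to order $d-2$, resp.\ $d-1$---far more than the bubble condition) you must already know $G_z(x) \le O(|x|^{-(d-2)})$. The trivial comparison $G_z\le C_z$ is useless for $z\in(1/\degree,z_c]$ because $C_z\equiv\infty$ there, and a priori you do not even know that $G_{z_c}$ is finite. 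This circularity is precisely what the bootstrap (forbidden-region) argument is designed to break: one assumes $G_z\le 3\,C_{1/\degree}$ for a fixed $z<z_c$, derives the diagrammatic and infrared bounds under that assumption, runs your Fourier argument to improve the conclusion to $G_z\le 2\,C_{1/\degree}$ for small $\beta$, and then uses continuity of $z\mapsto\sup_x G_z(x)/C_{1/\degree}(x)$ on $[0,z_c)$ to forbid the interval $(2,3]$, finally reaching $z_c$ by monotone convergence. Without this (or an equivalent fixed-point device), your uniform control of $\hat\Pi_z$, the expansion $\hat F_{z_c}(k)=A|k|^2(1+o(1))$, and everything downstream are unjustified. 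The rest of your plan is sound once this self-consistency step is inserted.
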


In \cite{Slad20_wsaw}, the method of proof of Theorem~\ref{thm:wsaw} is extended to
analyse the near-critical two-point function.  Namely, it is proved in \cite{Slad20_wsaw}
that for $d>4$ and for $\beta$ sufficiently small, there are constants $\kappa_0>0$
and $\kappa_1 \in (0,1)$ such that for all $z \in (0,z_c)$ and all $x \in \Z^d$,
\begin{equation}
\label{e:Gmr}
    G_z(x) \le \kappa_0 \frac{1}{1\vee |x|^{d-2}} e^{-\kappa_1 (z_c-z)^{1/2}|x|}.
\end{equation}
The estimate \eqref{e:Gmr} is applied in \cite{Slad20_wsaw} to prove existence of
a ``plateau'' for the weakly self-avoiding walk two-point function on a large discrete torus
in dimensions $d>4$.

An alternate proof of Theorem~\ref{thm:wsaw} is given in \cite{BHK18}, based on
Banach algebras and a fixed-point theorem.  That proof avoids explicit use of the Fourier transform, though
it does rely on an expansion of the Green function
$C_{1/\degree}(x)$ which is proved using the Fourier transform.
Theorem~\ref{thm:wsaw} is proved for the strictly self-avoiding walk (the case $\beta =1$)
in \cite{Hara08}, and for spread-out strictly self-avoiding walk in \cite{HHS03}.
Thus Theorem~\ref{thm:wsaw} is not new or best possible; our goal here is to present
a new and simple method of proof rather than to obtain a new result.
A sample consequence of Theorem~\ref{thm:wsaw} is that the bubble condition holds for $d>4$,
and this implies the matching upper bound $\chi(z) \le O((z_c-z)^{-1})$ (see \cite[Theorem~2.3]{Slad06}).

We use the Fourier transform.
Let $\mathbb{T}^d = (\R / 2 \pi \Z )^d$ denote the continuum torus of period $2\pi$.
For a summable function $f:\Z^d \to \C$ we define its Fourier transform by
\begin{equation}
    \hat f(k) = \sum_{x\in\Z^d}f(x)e^{ik\cdot x}
    \qquad
    (k \in \T^d).
\end{equation}
The inverse Fourier transform is
\begin{equation}
    f(x) = \int_{\T^d}\hat{f}(k) e^{-i k \cdot x } \frac{dk}{(2\pi)^{d}}
    \qquad
    (x \in \Z^d).
\end{equation}

\section{Lace expansion}

The lace expansion was introduced by Brydges and Spencer \cite{BS85} to prove
that the weakly self-avoiding walk is diffusive in dimensions $d>4$.
In the decades since 1985, the lace expansion has been adapted and extended to a broad range of
models and results.

For the weakly self-avoiding walk, the lace expansion
\cite{BS85,MS93,Slad06}
produces an explicit formula for the $\Z^d$-symmetric function $\Pi_z: \Z^d \to \R$
which satisfies, for $z\in [0,z_c)$,
\begin{equation}
    G_z(x) = \delta_{0,x} + z\degree (D*G_z)(x) + (\Pi_z* G_z)(x) \qquad (x \in \Z^d),
\end{equation}
or equivalently,
\begin{equation}
    \hat G_z(k) = \frac{1}{1-z\degree \hat D(k) - \hat \Pi_z(k)}
    \qquad (k \in \T^d).
\end{equation}
Let $\delta:\Z^d\to\R$ denote the Kronecker delta $\delta(x)=\delta_{0,x}$.
Then $\hat\delta(k)=1$.
We define
\begin{equation}
    F_z  = \delta - z\degree D  - \Pi_z , \qquad \hat F_z = 1 -z\degree\hat D -\hat\Pi_z.
\end{equation}
Then
\begin{equation}
     (G_z * F_z)(x)  = \delta_{0,x} , \qquad \hat G_z(k) = \frac{1}{\hat F_z(k)}.
\end{equation}

\section{Proof of main result}

\subsection{Diagrammatic estimate}

As in many applications of the lace expansion, we use a bootstrap argument.
We define the \emph{bootstrap function}
\begin{equation}
    b(z) = \sup_{x\in \Z^d} \frac{G_z(x)}{C_{1/\degree}(x)} \qquad (z \in [0,z_c]).
\end{equation}
The bootstrap function can be seen to be finite and
continuous in $z\in [0,z_c)$, using the fact that $G_z(x)$ is continuous and decays exponentially for large $|x|$.  We do not know {\it a priori} that $G_{z_c}(x)$ is finite.
By definition, $b(z) \le 1$ for $z \in [0,\frac{1}{\degree}]$.
The next proposition gives consequences of the assumption that $b(z) \le 3$.
We will not need to know more about the function $\Pi_z$ than
Proposition~\ref{prop:diagram}, so we do not give
its explicit formula here.  The formula can be found in \cite{BS85,MS93,Slad06}.

\begin{prop}
\label{prop:diagram}
Let $d>4$ and let $\beta$ be sufficiently small.  Fix $z \in [\frac{1}{\degree},z_c]$.
If $b(z) \le 3$ then there is a constant $K$ depending only on $d$ (and on ``3'') such that
\begin{equation}
\label{e:Pibd}
    |\Pi_z(x)| \le K\beta \frac{1}{1+|x|^{3(d-2)}}
    \qquad
    (x \in \Z^d)
\end{equation}
and hence $\hat\Pi_z \in C^{s}(\T^d)$
for any nonnegative integer $s< 2d-6$, in particular $\hat\Pi_z \in C^{d-2}(\T^d)$.
In addition, the \emph{infrared bound} holds, i.e., there exists $c>0$ (independent
of $\beta,z,k$) such that
\begin{equation}
\label{e:irbd}
    \hat F_z(k) \ge  c  |k|^{2}
    \qquad (k \in \T^d).
\end{equation}
\end{prop}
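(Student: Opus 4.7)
The proposition has three parts: the pointwise bound \refeq{Pibd}, the Fourier-regularity consequence, and the infrared bound \refeq{irbd}. Parts two and three follow rather directly from the first, so the technical core is the diagrammatic bound.

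\emph{Step 1: diagrammatic bound on $\Pi_z$.} I would start from the standard lace-expansion formula $\Pi_z = \sum_{N\ge 1}(-1)^N \Pi_z^{(N)}$ recorded in \cite{BS85,MS93,Slad06}, in which $\Pi_z^{(N)}(x)$ is a sum of weighted walks carrying $N$ distinct self-intersections, each contributing a factor of $\beta$. The standard diagrammatic manipulation (``passing to lace graphs'') bounds $|\Pi_z^{(N)}(x)|$ by $c^N\beta^N$ times a closed diagram built from $G_z$ that admits a three-edge-disjoint-paths factorization from $0$ to $x$ together with $N-1$ internal bubbles. I would apply the hypothesis $G_z(y)\le 3\, C_{1/\degree}(y)$ to dominate every edge by the random-walk Green function, bound each internal bubble by the finite quantity $B_d = \sum_y C_{1/\degree}(y)^2 < \infty$ (finite precisely because $d > 4$), and bound each of the three $0$-to-$x$ paths pointwise by $C(1+|x|)^{-(d-2)}$ using the standard lattice Green-function estimate. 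Summing the geometric series $\sum_N (c\beta B_d)^N$ for $\beta$ small then gives \refeq{Pibd}.

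\emph{Step 2: Fourier regularity.} From \refeq{Pibd},
\[
\sum_{x\in\Z^d}|x|^s\,|\Pi_z(x)| \,\le\, K\beta \sum_{x\in\Z^d}\frac{|x|^s}{1+|x|^{3(d-2)}} < \infty
\]
whenever $s < 3(d-2) - d = 2d-6$. Differentiating $\hat\Pi_z(k)=\sum_x \Pi_z(x)e^{ik\cdot x}$ termwise $s$ times is then justified, yielding $\hat\Pi_z \in C^s(\T^d)$. Since $d>4$ gives $d-2<2d-6$, this covers $s=d-2$.

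\emph{Step 3: infrared bound.} Using $\Z^d$-symmetry of $\Pi_z$, rearrange
\[
\hat F_z(k) = \hat F_z(0) + z\degree\bigl(1-\hat D(k)\bigr) + \sum_{x\in\Z^d}\Pi_z(x)\bigl(1-\cos(k\cdot x)\bigr).
\]
The first term is nonnegative because $\hat F_z(0) = 1/\chi(z) \ge 0$ (with equality at $z_c$). Using $1 - \hat D(k) = d^{-1}\sum_j (1-\cos k_j) \ge c_0|k|^2$ on $\T^d$ and $z \ge 1/\degree$, the middle term is at least $c_0|k|^2$. Finally $|1-\cos(k\cdot x)| \le \tfrac12|k|^2|x|^2$, so the third term is bounded in absolute value by $\tfrac12 |k|^2 \sum_x |x|^2|\Pi_z(x)|$, which is $O(\beta)|k|^2$ by \refeq{Pibd} (the sum converges since $3(d-2)-2 > d$ when $d>4$). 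Choosing $\beta$ small enough to absorb this error into $c_0|k|^2$ yields \refeq{irbd}.

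The main obstacle is Step 1: unpacking the lace-expansion combinatorics and verifying the three-paths-plus-bubbles factorization is the technical heart of the argument, and it is the step where the restriction $d>4$ and the bootstrap hypothesis $b(z)\le 3$ both enter crucially. Steps 2 and 3 are routine Fourier computations once the pointwise bound is in hand, with Step 3 relying on a Taylor-in-$k$ argument powered by the $|x|^2$-moment bound that the strong decay rate $|x|^{-3(d-2)}$ supplies gratis.
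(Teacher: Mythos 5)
Your proposal is correct and follows essentially the same route as the paper: the paper simply cites the diagrammatic estimate \refeq{Pibd} to the literature (e.g.\ \cite[(12)]{BHK18}) rather than sketching the three-disjoint-paths-plus-bubbles argument as you do, derives the $C^s$ regularity from exactly the same summability count $s<3(d-2)-d=2d-6$, and proves the infrared bound via the identical three-term decomposition of $\hat F_z(k)$. Your direct bound $|1-\cos(k\cdot x)|\le\tfrac12|k|^2|x|^2$ on the $\Pi_z$ term is an equivalent, slightly more explicit version of the paper's Taylor estimate on $\hat\Pi_z(0)-\hat\Pi_z(k)$.
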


\begin{proof}
The bound \eqref{e:Pibd} is a diagrammatic estimate proved via well-developed
technology (e.g., \cite[(12)]{BHK18})
and we omit its proof.  It follows immediately from \eqref{e:Pibd}
that $|x|^s|\Pi_z(x)|$ is summable for all $s< 2d-6$ and hence
that $\hat\Pi_z \in C^{s}(\T^d)$
for any nonnegative integer $s< 2d-6$.
For \eqref{e:irbd}, we use
\begin{align}
    \hat F_z(k) & = \hat F_z(0) +[\hat F_z(k) -\hat F_z(0)]
    = \hat F_z(0) + z\degree [1-\hat D(k)] + [\hat\Pi_z(0)-\hat\Pi_z(k)].
\end{align}
The first term on the right-hand side is  $\hat F_z(0) = \chi(z)^{-1} \ge 0$.
By definition, the second term obeys $1-\hat D(k) = d^{-1}\sum_{j=1}^d(1-\cos k_j) \ge \frac{4|k|^2}{\pi^2\degree}$.
By \eqref{e:Pibd}, the second derivative of $\hat\Pi_z(k)$ with respect to $k$ is $O(\beta)$,
and \eqref{e:irbd} then follows by a Taylor estimate on $\hat\Pi_z(0)-\hat\Pi_z(k)$
(by symmetry there is no linear term in $k$).
\end{proof}

\subsection{Isolation of leading term}

Following \cite{HHS03}, we isolate the leading term by writing $G_z$ as a $z$-dependent
multiple of the random walk two-point function $C_\mu$ at a $z$-dependent value of $\mu$.
Let  $A_z = \delta -z\degree D$, $\lambda  > 0$ and $\mu\in[0,\frac{1}{\degree}]$.
Since $C_\mu * A_\mu=\delta$ and $G_z *F_z = \delta$,
we have
\begin{align}
    G_z & = \lambda C_\mu + \delta*G_z - \lambda C_\mu * \delta
    \nnb & =
    \lambda C_\mu + C_\mu * A_\mu*G_z - \lambda C_\mu  *F_z * G_z
    \nnb & =
    \lambda C_\mu + C_\mu * E_{z,\lambda,\mu}*G_z
    ,
\end{align}
with
\begin{align}
     E_{z,\lambda,\mu} &  = A_\mu - \lambda F_z.
\end{align}

Given $z \in [\frac{1}{\degree},z_c)$,
we choose $\lambda=\lambda_z$ and $\mu=\mu_z$ in order to achieve
\begin{equation}
\label{e:Emoments}
    \sum_{x\in\Z^d} E_{z,\lambda_z,\mu_z}(x) = \sum_{x\in\Z^d} |x|^2 E_{z,\lambda_z,\mu_z}(x) =0,
\end{equation}
namely (since $\sum_x |x|^2 D(x)=1$)
\begin{align}
    \lambda_z & = \frac{1}{\hat F_z(0) -
    \sum_x |x|^2 F_z(x)}
    =
    \frac{1}{1-\hat \Pi_z(0) +
    \sum_x |x|^2 \Pi_z(x)} ,
    \\
\label{e:muzdef}
    \mu_z \degree & = 1-\lambda_z \hat F_z(0)
        = \frac{z\degree + \sum_x |x|^2 \Pi_z(x)}
    {\hat F_z(0) + z\degree + \sum_x |x|^2 \Pi_z(x)}
    .
\end{align}
By Proposition~\ref{prop:diagram}, if we assume $b(z) \le 3$
then the second moment of $\Pi_z$ is $O(\beta)$, and hence
the above formulas are well defined, $\lambda_{z} = 1+O(\beta)$, and $\mu_z\degree \in [0,1)$.
In particular, if $b(z_c)\le 3$
(as we will eventually show to be the case), then, since $\hat F_{z_c}(0)=\chi(z_c)^{-1}=0$,
we see from \eqref{e:muzdef} that $\mu_{z_c} =1/\degree$ is the critical
value for $C_\mu$.

With these choices of $\lambda_z,\mu_z$, we have
\begin{align}
\label{e:GSEG}
    G_z & =
    \lambda_z C_{\mu_z} + f_z, \qquad f_z = C_{\mu_z} * E_{z}*G_z
    ,
\end{align}
with
\begin{equation}
\label{e:Ezdef}
    E_z = E_{z,\lambda_z,\mu_z} =
    (1-\lambda_z )(\delta-D) - \lambda_z \hat \Pi_z(0) D + \lambda_z \Pi_z.
\end{equation}
By definition,
\begin{equation}
\label{e:fhdef}
    \hat f_z(k) =  \hat C_{\mu_z}(k) \hat E_z(k) \hat G_z(k).
\end{equation}
Roughly, since we have arranged
via \eqref{e:Emoments}
that the Taylor expansion of $\hat E_z(k)$ has no constant
term or term of order $|k|^2$, we expect it to be of order $\beta |k|^4$.
On the other hand, according to the
infrared bound, the Fourier transform of $\hat G_z(k)$
will be of order $|k|^{-2}$ for small $|k|$.  The same is true for $\hat C_{\mu_z}(k)$,
so $\hat f_z(0)=O(\beta)$.
We will show that
this less singular behaviour of $\hat f_z(k)$
translates into better decay than $|x|^{-(d-2)}$ for $f_z(x)$.
This will permit
the bootstrap argument to be completed by proving $b(z) \le 2$, and the proof will
essentially be complete.  The details in this rough sketch are given below.

\subsection{The bootstrap}

The bootstrap argument is encapsulated in the following proposition.

\begin{prop}
\label{prop:boot}
Fix $z \in [\degree^{-1},z_c)$.
If $b(z) \le 3$ then for $\beta$ sufficiently small (not depending on $z$) it is in fact
the case that $b(z) \le 2$.
\end{prop}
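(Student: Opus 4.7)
The plan is to use the decomposition $G_z = \lambda_z C_{\mu_z} + f_z$ of \eqref{e:GSEG} and to bound each term pointwise by a multiple of $C_{1/\degree}$. The series representation $C_\mu(x) = \sum_n(\mu\degree)^n D^{*n}(x)$ together with $\mu_z \le 1/\degree$ (immediate from \eqref{e:muzdef}, since $\hat F_z(0)=\chi(z)^{-1}\ge 0$) gives $C_{\mu_z}(x) \le C_{1/\degree}(x)$; combined with $\lambda_z = 1+O(\beta)$ from Proposition~\ref{prop:diagram}, this yields $\lambda_z C_{\mu_z}(x) \le (1 + c_1\beta)\,C_{1/\degree}(x)$. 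It therefore suffices to establish $|f_z(x)| \le c_2\beta\, C_{1/\degree}(x)$, as then $G_z(x) \le (1+(c_1+c_2)\beta)\,C_{1/\degree}(x) \le 2 C_{1/\degree}(x)$ for $\beta$ small. Using the classical lower bound $C_{1/\degree}(x) \ge c(1+|x|)^{-(d-2)}$, this in turn reduces to proving $|f_z(x)| \le C\beta\,(1+|x|)^{-(d-2)}$.

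The analysis proceeds in Fourier space, starting from $\hat f_z(k) = \hat C_{\mu_z}(k)\hat E_z(k)\hat G_z(k)$ in \eqref{e:fhdef}. The infrared bound \eqref{e:irbd} gives $|\hat G_z(k)|\le c^{-1}|k|^{-2}$, and the analogous estimate $1-\mu_z\degree\hat D(k)\ge c|k|^2$ (from $\mu_z\degree\le 1$ and $1-\hat D(k)\ge c'|k|^2$) gives $|\hat C_{\mu_z}(k)|\le C|k|^{-2}$. From \eqref{e:Ezdef} together with \eqref{e:Pibd}, every summand of $E_z$ carries an explicit factor of $\beta$ through $1-\lambda_z$, $\hat\Pi_z(0)$, or $\Pi_z$, so $|\partial^\alpha\hat E_z(k)|\le C\beta$ uniformly on $\T^d$ for every multi-index with $|\alpha|\le d-2$. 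Moreover, the choice \eqref{e:Emoments} forces $\hat E_z(0) = 0$ and $\Delta\hat E_z(0) = 0$, and $\Z^d$-symmetry kills all odd-order derivatives at $k=0$; Taylor's theorem with quantitative remainder then yields $|\partial_{k_j}^b\hat E_z(k)|\le C\beta|k|^{m-b}$ for $b\le m\bydef\min(d-2,4)$. Putting these bounds together gives $|\hat f_z(k)|\le C\beta$ uniformly on $\T^d$, and therefore $|f_z(x)|\le \|\hat f_z\|_{L^1(\T^d)}\le C\beta$, which handles bounded $|x|$.

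For $|x|\ge 1$, pick the coordinate $j$ with $|x_j|$ maximal (so $|x_j|\ge|x|/\sqrt d$) and integrate by parts $d-2$ times in $k_j$:
\begin{equation*}
    |f_z(x)| \le \frac{\|\partial_{k_j}^{d-2}\hat f_z\|_{L^1(\T^d)}}{|x_j|^{d-2}}.
\end{equation*}
Expanding $\partial_{k_j}^{d-2}\hat f_z$ by the product rule into a finite sum of terms $\partial_{k_j}^a\hat C_{\mu_z}\cdot\partial_{k_j}^b\hat E_z\cdot\partial_{k_j}^c\hat G_z$ with $a+b+c=d-2$, and combining the estimates above (each derivative of $\hat C_{\mu_z}$ or $\hat G_z$ costs a factor $|k|^{-1}$ near the origin, each derivative of $\hat E_z$ consumes one unit of vanishing), a short bookkeeping shows that every such term is bounded near $k=0$ by $C\beta|k|^{m-d-2}$ when $b\le m$ and by $C\beta|k|^{b-d-2}$ when $b>m$. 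Since $m\ge 3$ for $d\ge 5$, the resulting singularity is always of order strictly less than $d$ and thus integrable on $\T^d$; away from $k=0$ the $C^{d-2}$-smoothness of $\hat\Pi_z$ (from \eqref{e:Pibd}) keeps everything bounded. Hence $\|\partial_{k_j}^{d-2}\hat f_z\|_{L^1(\T^d)}\le C\beta$, and therefore $|f_z(x)|\le C\beta/|x|^{d-2}$, which together with the bound of the preceding paragraph gives $|f_z(x)|\le C\beta(1+|x|)^{-(d-2)}$, as required.

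The main obstacle is the derivative bookkeeping in the last step: one must check, term by term in the product-rule expansion, that the vanishing of $\hat E_z$ at the origin just beats the combined singularity contributed by $\hat C_{\mu_z}$, $\hat G_z$ and their derivatives, so that $\partial_{k_j}^{d-2}\hat f_z$ remains in $L^1(\T^d)$ with norm $O(\beta)$. The margin is tightest in the borderline case $d=5$, where $\hat\Pi_z\in C^3$ provides only three usable derivatives and the available vanishing order of $\hat E_z$ at $k=0$ is $3$ rather than $4$; but the worst case is then a singularity of order $|k|^{-4}$, which is still integrable on $\T^5$ since $4<5$.
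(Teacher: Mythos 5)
Your proof is correct and follows essentially the same route as the paper: the paper packages the $L^1$ bounds on $\nabla^\alpha \hat f_z$ (obtained via the product rule, the infrared bound for $\hat C_{\mu_z}$ and $\hat G_z$, and the vanishing of $\hat E_z$ at $k=0$ forced by \eqref{e:Emoments}) into Proposition~\ref{prop:h} and Lemma~\ref{lem:Ebd}, and then invokes the integration-by-parts estimate \eqref{e:Graf}, which is exactly the argument you carry out by hand. The only minor difference is at $d=5$, where you use vanishing of $\hat E_z$ to order $|k|^{3}$ (all that $C^3$ regularity of $\hat\Pi_z$ cheaply provides) while the paper proves the sharper $|k|^{4}\log|k|^{-1}$; both give an integrable singularity for $\partial_{k_j}^{d-2}\hat f_z$ on $\T^5$, so your bookkeeping closes the bootstrap just as well.
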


The next proposition is a replacement for the bound
on $ C_{\mu_z}*  E_z$ in \cite[Proposition~1.9]{HHS03} which required a delicate
Fourier analysis of $C_{\mu_z}$, and of the bound of \cite[Lemma~4]{BHK18} which used
the expansion $ C_{1/\degree}(x) = a|x|^{-(d-2)} + b|x|^{-d} + O(|x|^{-(d+2)})$ from \cite{Uchi98}
which was also proved by careful Fourier analysis.

\begin{prop}
\label{prop:h}
Let $d>4$ and let $\beta$ be sufficiently small.
Let $z\in [\frac{1}{\degree},z_c]$.
Under the assumption that $b(z) \le 3$, the derivatives $\nabla^\alpha \hat f_z(k)$ obey
\begin{equation}
\label{e:nablafbd}
    \|\nabla^\alpha \hat f_z  \|_{L^1(\T^d )} \le O(\beta)
\end{equation}
provided $|\alpha|\le d-2$ when $d=5$ and $|\alpha| \le d-1$ for $d \ge 6$,
with constant depending only on $d$ (not on $z$).
\end{prop}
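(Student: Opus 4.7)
The plan is to apply the Leibniz rule to the product $\hat f_z = \hat C_{\mu_z}\hat E_z\hat G_z$ from \eqref{e:fhdef} and bound each resulting term pointwise on $\T^d$. Writing $\nabla^\alpha \hat f_z$ as a sum of terms of the form $\nabla^{\alpha_1}\hat C_{\mu_z}\cdot\nabla^{\alpha_2}\hat E_z\cdot\nabla^{\alpha_3}\hat G_z$ with $\alpha_1+\alpha_2+\alpha_3=\alpha$, it suffices to estimate each summand in $L^1(\T^d)$.

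For the two Green-function-like factors, I would establish
\begin{equation*}
|\nabla^{\alpha_1}\hat C_{\mu_z}(k)|\le C|k|^{-2-|\alpha_1|},\qquad |\nabla^{\alpha_3}\hat G_z(k)|\le C|k|^{-2-|\alpha_3|}.
\end{equation*}
The first follows from the lower bound $1-\mu_z\degree\hat D(k)\ge c|k|^2$ (combining $1-\hat D(k)\ge c|k|^2$ with $\mu_z\degree$ bounded away from $0$ and $1$, visible from \eqref{e:muzdef} together with $z\degree\ge 1$, $\hat F_z(0)=\chi(z)^{-1}=O(\beta)$, and the second-moment bound from Proposition~\ref{prop:diagram}) and the smoothness of $\hat D$. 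The second follows analogously from the infrared bound \eqref{e:irbd} and the $C^s$ regularity of $\hat\Pi_z$ for $s<2d-6$. Both are proved by iterating the quotient rule (or applying Fa\`a di Bruno), exploiting that $\nabla\hat D(0)=\nabla\hat\Pi_z(0)=0$ by $\Z^d$-symmetry, so a single derivative of the denominator contributes only an additional $|k|^{-1}$ to the singularity.

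For the moment-cancellation factor $\hat E_z$, the two conditions \eqref{e:Emoments} combined with $\Z^d$-symmetry of $E_z$ force the Taylor coefficients of $\hat E_z$ at $k=0$ of orders $0,1,2,3$ to vanish: the odd-order coefficients by symmetry, order $0$ by the first part of \eqref{e:Emoments}, and order $2$ because the Hessian of $\hat E_z$ at $0$ is a scalar multiple of the identity whose trace equals $-\sum_x|x|^2 E_z(x)=0$. The $O(\beta)$ prefactor comes from $1-\lambda_z=O(\beta)$, $\hat\Pi_z(0)=O(\beta)$, and all relevant moments of $\Pi_z$ being $O(\beta)$ via \eqref{e:Pibd}. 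For $d\ge 6$ the $4$th moment of $\Pi_z$ is finite, and a Taylor estimate yields $|\nabla^{\alpha_2}\hat E_z(k)|\le C\beta|k|^{(4-|\alpha_2|)_+}$ near $k=0$, with the global bound $|\nabla^{\alpha_2}\hat E_z|\le C\beta$. Multiplying the three factor bounds gives, in the worst case $|\alpha_2|\le 4$, the uniform estimate $C\beta|k|^{-|\alpha|}$; this is integrable on $\T^d$ as long as $|\alpha|<d$, so the hypothesis $|\alpha|\le d-1$ yields $\|\nabla^\alpha\hat f_z\|_{L^1}=O(\beta)$.

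The main obstacle is the borderline case $d=5$: here $3(d-2)=9$ just fails to make $\sum_x|x|^4|\Pi_z(x)|$ convergent, so $\hat\Pi_z$ is only $C^3$ and the clean bound $|\hat E_z(k)|\le C\beta|k|^4$ is unavailable. I would address this by splitting the sum defining $\nabla^{\alpha_2}\hat E_z(k)$ at $|x|\sim 1/|k|$ and applying $|e^{iu}-\sum_{j\le 3}(iu)^j/j!|\le C\min(|u|^4,|u|^3)$; this produces at most a logarithmic loss, yielding for example $|\hat E_z(k)|\le C\beta|k|^4\log(1/|k|)$. The restriction $|\alpha|\le d-2=3$ in $d=5$ precisely compensates for these logarithmic losses, since $|k|^{-3}\log(1/|k|)$ is integrable on $\T^5$.
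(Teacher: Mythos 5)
Your proposal is correct and follows essentially the same route as the paper: the Leibniz-rule decomposition of $\nabla^\alpha(\hat C_{\mu_z}\hat E_z\hat G_z)$ with quotient-rule power counting for the derivatives of $\hat C_{\mu_z}$ and $\hat G_z=1/\hat F_z$, together with the moment-cancellation/Taylor-remainder bounds on $\nabla^{\alpha_2}\hat E_z$ (including the split of the sum at $|x|\sim|k|^{-1}$ for $d=5$), which is exactly the content of Lemma~\ref{lem:Ebd}. The only quibbles are cosmetic: $\mu_z\degree$ need not be (and is not) bounded away from $1$ — only $\mu_z\degree\in[c,1]$ is needed for the infrared bound on $\hat C_{\mu_z}$ — and the restriction to $|\alpha|\le d-2$ when $d=5$ is forced by the failure of the fourth moment of $\Pi_z$ (which you correctly identify earlier) rather than by the logarithmic losses.
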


The importance of \eqref{e:nablafbd} resides in the fact that the smoothness
of a function on the torus implies bounds on the decay of its (inverse) Fourier transform.
More precisely, it is proved in \cite[Corollary~3.3.10]{Graf10} using integration by parts that
there is a constant $\kappa_{d,s}$, depending only on the dimension $d$ and the
maximal order $s$
of differentiation,
such that if $\nabla^\alpha \hat\psi
\in L^1(\T^d)$ for all multi-indices $\alpha$ with $|\alpha| \le s$ then
\begin{equation}
\label{e:Graf}
    |\psi(x)|
    \le
    \kappa_{d,s} \frac{1}{1\vee |x|^s}
    \max_{|\alpha| \in \{0,s\}}\|\nabla^\alpha \hat\psi\|_{L^1(\T^d)}.
\end{equation}
In addition to the quantitative estimate \eqref{e:Graf}, it follows from
the integrability of
$\nabla^\alpha \hat\psi$ together with the Riemann--Lebesgue Lemma (e.g.,
\cite[Proposition~3.3.1]{Graf10}) that the inverse Fourier transform of
$\nabla^\alpha \hat\psi$ vanishes at infinity, so
$|x|^s \psi(x) \to 0$ as $|x|\to\infty$.

\begin{proof}[Proof of Proposition~\ref{prop:boot}]
Suppose that $b(z) \le 3$.
Let $n_d=d-2$ for $d=5$ and $n_d=d-1$ for $d \ge 6$.
By \eqref{e:nablafbd}, as discussed around \eqref{e:Graf},
$|f_z(x)|= o(|x|^{-n_d})$ and
$|f_z(x)| \le  O(\beta |x|^{-n_d})$,
with $z$-independent constant in the latter bound.
Therefore,
\begin{align}
\label{e:GCasy}
    G_z(x) & = \lambda_z C_{\mu_z}(x)  + o(|x|^{-n_d}),
    \\
\label{e:GCbd}
    G_z(x) & = \lambda_z C_{\mu_z}(x)  + O(\beta |x|^{-n_d}),
\end{align}
with the constant in \eqref{e:GCbd} independent of $x$ and $z$.
As is well known (e.g., \cite{Uchi98,LL12}),
$C_{1/\degree}(x) \sim a_d  |x|^{-(d-2)}$.
From \eqref{e:GCbd},
we see that by taking $\beta$ sufficiently small we can obtain
\begin{equation}
    G_z(x) \le (1+O(\beta))C_{1/\degree}(x) + O(\beta)C_{1/\degree}(x) \le 2 C_{1/\degree}(x),
\end{equation}
i.e., $b(z) \le 2$.
This completes the proof.
\end{proof}

\begin{proof}[Proof of Theorem~\ref{thm:wsaw}]
Since $b(z) \le b(\degree^{-1})\le 1$ for $z \le \degree^{-1}$ by definition,
it follows from Proposition~\ref{prop:boot} and the continuity of the function $b$
that the interval $(2,3]$ is forbidden for values of $b(z)$,
so $b(z) \le 2$ for all $z\in [0,z_c)$.  By monotone convergence, also $b(z_c) \le 2$.  Thus $\lambda_z$ approaches a limit
$\lambda_{z_c}=1+O(\beta)$ as $z \to z_c^-$.
Since $b(z_c) \le 2$, \eqref{e:GCasy} holds for $z=z_c$ so
$G_{z_c}(x)=\lambda_{z_c}C_{1/\degree}(x) +o(|x|^{-n_d})$.
Thus Theorem~\ref{thm:wsaw} is proved subject to Proposition~\ref{prop:h}.
\end{proof}

\subsection{Proof of Proposition~\ref{prop:h}}

It remains only to prove Proposition~\ref{prop:h}.
The next lemma is closely related to \cite[Lemma~7.2]{HHS03}.  It
reflects our choice of $\lambda_z,\mu_z$ to achieve \eqref{e:Emoments}.

\begin{lemma}
\label{lem:Ebd}
Let $d>4$ and let $\beta$ be sufficiently small.
Let $z \in [\frac{1}{\degree},z_c]$ and suppose that $b(z) \le 3$.
There is a $c_0>0$ (independent of $z$)
such that $|\nabla^\alpha \hat E_z(k)| \le c_0 \beta$
for $|\alpha| < 2d-6$, and moreover
\begin{equation}
    |\nabla^\alpha \hat E_z(k)| \le
    c_0  \beta \times
    \begin{cases}
    |k|^{4-|\alpha|} & (d>5)
    \\
    |k|^{4-|\alpha|} \log |k|^{-1} & (d=5)
    \end{cases}
    \qquad (|\alpha| \le 3).
\end{equation}
\end{lemma}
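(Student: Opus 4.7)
The plan is to combine the moment conditions in \eqref{e:Emoments}, which force $\hat E_z(0)=0$ and $\nabla^2 \hat E_z(0)=0$, with the $\Z^d$-reflection symmetry of $E_z$, which kills $\nabla^\alpha \hat E_z(0)$ for every odd $|\alpha|$. Together these yield
\[
    \sum_{x\in\Z^d} x^\gamma E_z(x) = 0 \qquad \text{for every multi-index } \gamma \text{ with } |\gamma|\le 3,
\]
which is the algebraic input for the refined bound.

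For the uniform estimate $|\nabla^\alpha \hat E_z(k)| \le c_0 \beta$ with $|\alpha|<2d-6$, I would take the Fourier transform of \eqref{e:Ezdef},
\[
    \hat E_z(k) = (1-\lambda_z)(1-\hat D(k)) - \lambda_z\hat\Pi_z(0)\hat D(k) + \lambda_z\hat\Pi_z(k),
\]
and use that $1-\lambda_z = O(\beta)$ and $\hat\Pi_z(0)=O(\beta)$ (explicit from the formulas preceding \eqref{e:muzdef} together with \eqref{e:Pibd}), that the derivatives of $\hat D$ are uniformly bounded, and that $\|\nabla^\alpha\hat\Pi_z\|_\infty \le \sum_x |x|^{|\alpha|}|\Pi_z(x)|=O(\beta)$ for $|\alpha|<2d-6$ by \eqref{e:Pibd}. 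The triangle inequality gives the claim.

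The heart of the argument is the refined bound for $|\alpha|\le 3$. Writing
\[
    R_m(\theta) := e^{i\theta} - \sum_{n=0}^{m} \frac{(i\theta)^n}{n!}, \qquad |R_m(\theta)| \le C \min\!\bigl(|\theta|^{m+1},|\theta|^m\bigr),
\]
the moment identities let me replace $e^{ik\cdot x}$ by $R_{3-|\alpha|}(k\cdot x)$ plus its Taylor polynomial inside the Fourier sum (the polynomial contributes zero), yielding
\[
    \nabla^\alpha \hat E_z(k) = \sum_{x\in\Z^d} E_z(x)\,(ix)^\alpha\,R_{3-|\alpha|}(k\cdot x), \qquad |\alpha|\le 3,
\]
and hence
\[
    |\nabla^\alpha \hat E_z(k)| \le C \sum_x |E_z(x)|\,\min\!\bigl(|k|^{4-|\alpha|}|x|^4,\, |k|^{3-|\alpha|}|x|^3\bigr).
\]

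Finally, I would split the sum at $|x|=|k|^{-1}$, using the first entry of the minimum on the ball and the second on its complement. From \eqref{e:Ezdef} and Proposition~\ref{prop:diagram}, $|E_z(x)|\le O(\beta)$ for $|x|\le 1$ and $|E_z(x)|\le O(\beta)/(1+|x|^{3(d-2)})$ for $|x|\ge 2$. The inner-ball contribution reduces to $|k|^{4-|\alpha|}$ times $\sum_{|x|\le 1/|k|}|x|^4/(1+|x|^{3(d-2)})$, which is uniformly bounded for $d>5$ but grows like $\log|k|^{-1}$ at the borderline $d=5$ (there $|x|^4/(1+|x|^9) \sim |x|^{-5}$). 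The outer-ball contribution is of order $|k|^{3-|\alpha|}\cdot |k|^{2d-9}=|k|^{2d-6-|\alpha|}$, which is no larger than $|k|^{4-|\alpha|}$ for all $d\ge 5$. Summing reproduces exactly the stated bounds.

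The main subtlety I anticipate is the $d=5$ endpoint, where $2d-6=4$ puts $\sum_x|x|^4|\Pi_z(x)|$ precisely at the logarithmic threshold; the two-sided $\min$-bound on $R_m$ combined with the cutoff at $|x|=|k|^{-1}$ is exactly what is needed to see that a single logarithmic factor, and no more, appears in $d=5$, and that none is produced once $d\ge 6$.
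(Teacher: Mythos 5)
Your proposal is correct and follows essentially the same route as the paper: the uniform bound via term-by-term estimation of \eqref{e:Ezdef}, and the refined bound by subtracting the Taylor polynomial killed by \eqref{e:Emoments} and symmetry, bounding the remainder by a two-sided minimum, and splitting the sum at $|x|=|k|^{-1}$ (the paper phrases this with the cosine remainder $g_x(k)=\cos(k\cdot x)-1+\tfrac{(k\cdot x)^2}{2}$ and explicit bounds on $\nabla^\alpha g_x$, but the power counting and the source of the $d=5$ logarithm are identical). No gaps.
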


\begin{proof}
We first prove that $|\nabla^\alpha \hat E_z(k)| \le c_0 \beta$
for $|\alpha| < 2d-6$, via
the inequality $|\nabla^\alpha \hat E_z(k)| \le \sum_x |x^\alpha E_z(x)|$ together with
term-by-term estimation in the formula for $E_z$ in \eqref{e:Ezdef}.
Indeed, since $\lambda_z=1+O(\beta)$, the contribution from all moments of the term
$(1-\lambda_z)(\delta - D)$ is $O(\beta)$, and since $|\hat\Pi_z(0)| \le O(\beta)$
by \eqref{e:Pibd}, all moments of $\lambda_z \hat\Pi_z(0)D$ are also $O(\beta)$.
Finally, the moments of $\lambda_z|\Pi_z|$ with order less than $2d-6$
are bounded by $O(\beta)$ using \eqref{e:Pibd}.
In the remainder of the proof, we can therefore restrict to $|\alpha|\le 3$.

Let $g_x(k) = \cos (k\cdot x) - 1 + \frac{(k\cdot x)^2}{2!}$.
By symmetry and by \eqref{e:Emoments},
\begin{align}
    \hat E_z(k) & = \sum_{x\in\Z^d}  E_z(x) \cos (k\cdot x)
    =  \sum_{x\in\Z^d}  E_z(x) g_x(k)
    .
\end{align}
By explicit computation of the derivatives and elementary properties of sine and cosine,
\begin{gather}
     |g_x(k)| \le c(|k|^4|x|^4 \wedge (1 + |k|^2|x|^2)),
    \qquad
    |\nabla_i g_x(k)| \le c(|k|^3|x|^4 \wedge (|x|+|k||x|^2)),
    \\
     |\nabla_{ij}^2 g_x(k)| \le c(|k|^2|x|^4 \wedge |x|^2),
    \qquad
    |\nabla_{ijl}^3 g_x(k)| \le c(|k||x|^4 \wedge |x|^3),
    \qquad
\end{gather}
where $\wedge$ denotes minimum.  In the upper bound $|\nabla^\alpha \hat E_z(k)|
\le \sum_x |E_z(x)|\, |\nabla^{\alpha}g_x(k)|$,
we estimate the sum over $|x|\le |k|^{-1}$ using the $|x|^4$ bound on $\nabla^\alpha g_x(k)$,
and we estimate the sum over $|x|> |k|^{-1}$ using the other alternative in the minimum.
By a term-by-term estimate using \eqref{e:Ezdef} with \eqref{e:Pibd} (as in the previous
paragraph), for $|\alpha| \le 3$ and $d \ge 5$, and for small $|k|$,
\begin{align}
    \sum_{|x| \le |k|^{-1}} |x|^4 |E_z(x)|
    & \le
    O(\beta) \int_1^{|k|^{-1}} \frac{r^{d-1+4}}{r^{3(d-2)}}dr
    =
    O(\beta) \int_1^{|k|^{-1}} \frac{1}{r^{2d-9}}dr
    =
    \begin{cases}
        O(\beta) & (d>5)
        \\
        O(\beta \log |k|^{-1}) & (d=5),
    \end{cases}
\\
    \sum_{|x| > |k|^{-1}} |x|^{|\alpha|} |E_z(x)|
    & \le
    O(\beta) \int_{|k|^{-1}}^\infty \frac{r^{d-1+|\alpha|}}{r^{3(d-2)}}dr
    =
    O(\beta) \int_{|k|^{-1}}^\infty\frac{1}{r^{2d-5-|\alpha|}}dr
    =
        O(\beta |k|^{2d-6-|\alpha|}) ,
\end{align}
and the desired result then follows after some bookkeeping.
\end{proof}

\begin{proof}[Proof of Proposition~\ref{prop:h}]
Let $n_d=d-2$ for $d=5$ and $n_d=d-1$ for $d \ge 6$; then $n_d < 2d-6$.
Our goal is to prove the bound \eqref{e:nablafbd} on derivatives of $\hat f_z$ of order
up to $n_d$.
By the infrared bound and Lemma~\ref{lem:Ebd},  for $d>5$ we have
\begin{equation}
    |\hat f_z(k)| \le  |k|^{-2} O( \beta) |k|^4 |k|^{-2} = O(\beta),
\end{equation}
and a similar estimate holds for $d=5$ with an additional factor $\log |k|^{-1}$ in
the upper bound.

Estimation of the $L^1$ norm of derivatives of $\hat f_z(k)$
is an exercise in power counting, as follows.
For $|\alpha| \le n_d$, by the product rule for differentiation
$\nabla^\alpha \hat f_z(k)$ involves terms
\begin{equation}
    \nabla^{\alpha_1}\hat C_{\mu_z}(k)
    \nabla^{\alpha_2}\hat E_{z}(k) \nabla^{\alpha_3}\hat G_{z}(k) ,
    \qquad
    (|\alpha_1|+|\alpha_2|+|\alpha_3| =|\alpha|).
\end{equation}
By Lemma~\ref{lem:Ebd},
each derivative on $\hat E$ up to the fourth order reduces by $1$ the original power $|k|^4$ for that factor (an unimportant factor $\log |k|^{-1}$ is present for $d=5$),
and subsequent derivatives do not cause further reduction; the net effect
is therefore reduction by $\min\{|\alpha_2|,4\}$.
Similarly, each derivative on $\hat C_{\mu_z}$ or $\hat G_z$
reduces (worsens)
its power $|k|^{-2}$ by $1$; we illustrate the idea for $d=5$, for which $n_d=d-2=3$:
\begin{align}
    \left| \nabla_i \frac{1}{ \hat F} \right|
    &
    = \left| \frac{\nabla_i\hat F }{\hat F^2} \right|
    \le c_1 \frac{|k_i|}{ |k|^{4}} \le c_1 \frac{1}{ |k|^{3}},
    \qquad
    \left| \nabla_i^2 \frac{1}{\hat F} \right|
    \le \left| \frac{\nabla_i^2\hat F }{\hat F^2} \right|
    +
    \left| \frac{2( \nabla_i\hat F)^2 }{ \hat F^3} \right|
    \le c_2 \frac{k_i^0}{|k|^4} + c_2 \frac{k_i^2}{|k|^6} ,
     \\
    \left| \nabla_i^3 \frac{1}{\hat F} \right|
    &
    \le \left| \frac{\nabla_i^3\hat F }{\hat F^2} \right|
    +
    \left| \frac{6( \nabla_i\hat F)(\nabla_i^2 \hat F) }{ \hat F^3} \right|
    +
    \left| \frac{6( \nabla_i\hat F)^3 }{ \hat F^4} \right|
    \le c_3 \frac{k_i^0}{|k|^4} + c_3 \frac{k_i^{1+0}}{|k|^6} +c_3 \frac{k_i^3}{|k|^8}   .
\end{align}
A detail in the above calculation is that $|\nabla_i\hat F|$ can be bounded by a multiple
of $|\nabla_i\hat D| + |\nabla_i\hat \Pi|$, with the first term of order $k_i$ by explicit
calculation and the second also of order $k_i$ by Taylor's Theorem, symmetry,
and the boundedness of $\nabla_i^2\Pi$.

In the general case, in advancing from one derivative to the next, when the derivative acts
on the numerator it either maintains the same power of $|k|$ or reduces it
by $1$, and if it acts on the denominator then it increases the power of the denominator
by $|k|^2$ and increases the power of the numerator by $1$; the net result is
reduction of the overall power of $|k|$ by at most $1$.
For $|\alpha| \le n_d$, the total resulting power is (for small $|k|$) at worst
\begin{equation}
    \frac{|k|^{4-\min\{|\alpha_2|,4\}}}{|k|^{2+ |\alpha_1|} |k|^{2+|\alpha_3|}}
    =
    \frac{1}{|k|^{ |\alpha_1|+\min\{|\alpha_2|,4\} +|\alpha_3|}}
    \le
    \frac{1}{|k|^{|\alpha|}}
    \le
    \frac{1}{|k|^{n_d}}.
\end{equation}
This is in $L^1(\T^d)$ (also with the additional logarithmic factor when $d=5$)
and the norm in $L^1(\T^d)$
is $O(\beta)$ due to the factor $\beta$ in the bound on $\hat E_z$ in Lemma~\ref{lem:Ebd}.
This completes the proof.\footnote{An earlier proof of Proposition~\ref{prop:h}
was based on Kotani's Theorem \cite{Slad20_Kotani}
rather than via the direct proof given here.}
\end{proof}

\subsection{Concluding remarks}

\begin{enumerate}[label=(\roman*)]
\item
The main difference between the above proof and the proofs in \cite{HHS03,Hara08,BHK18}
is our avoidance of any need to convert the decay in $x$-space of factors in a convolution
$C_{\mu_z}*E_z*G_z$ into decay of the convolution (which is delicate
since, e.g., the convolution of two factors each with decay $|x|^{-(d-2)}$ has worse
decay $|x|^{-(d-4)}$ when $d>4$).  In the above proof, we encounter
instead the Fourier transform $\hat C_{\mu_z}\hat E_z \hat G_z$ and the corresponding
step is handled simply via the product rule for differentiation.

\item
To control the lace expansion,
the analysis in any of \cite{HHS03,Hara08,BHK18}
only requires a bound of the form $|\Pi(x)|\le O(|x|^{-(d+2+\epsilon)})$ for $\epsilon>0$
(with $\epsilon=2$ in \cite{BHK18}), whereas
the above proof
requires the more demanding bound $|\Pi(x)| \le  O(|x|^{-(2d-2+\epsilon)})$
in order for $\hat\Pi(k)$ to have derivatives of order $d-2$.
For self-avoiding walk, the upper bound \eqref{e:Pibd} has power $3(d-2) = 2d-2 + (d-4)$ which is sufficient.
For the Ising and $\varphi^4$ models, $\Pi(x)$  also
obeys an upper bound $|x|^{-3(d-2)}$ \cite{BHH19,Saka07,Saka15}.  However,
the above proof appears not to apply to percolation or to lattice trees and
lattice animals, where
the bound on $\Pi(x)$ is $|x|^{-2(d-2)}$ for percolation and $|x|^{-3(d-2)+d}$ for
lattice trees and lattice animals \cite{HHS03}.  It would be of interest to understand
better why this breakdown occurs and whether there is any possiblity to overcome it in these settings with upper critical dimension equal to $6$ or $8$.

\item
With further effort, it may be possible to extend our approach to spread-out
models of strictly self-avoiding walk or the Ising or $\varphi^4$ models in dimensions $d>4$
by proving a version of \eqref{e:nablafbd} in those settings.
Possibly this could simplify aspects of the analysis in \cite{HHS03,Saka07,Saka15,BHH19}.
However, this is beyond our current scope and we do not draw a conclusion about
this question here.

\item
It is natural to ask whether our approach could be used for the nearest-neighbour
strictly self-avoiding walk in dimensions $d \ge 5$, to give an alternate proof
of the $|x|^{-(d-2)}$ decay proved in \cite{Hara08}.
This certainly could not be done, at present, without
a portion of the very sizeable and computer-assisted input from \cite{HS92b} listed
in \cite[Proposition~1.3]{Hara08}; in fact results beyond those of \cite{HS92b} may be needed
to deal with the higher derivatives of $\hat\Pi(k)$ encountered here.
A further and serious obstacle, as pointed out and overcome
with a different method in \cite{Hara08}, is that
the amplitude $a_d = \frac d2 \pi^{-d/2} \Gamma((d-2)/2)$ in the asymptotic formula for the critical simple random walk two-point
function (and appearing in Theorem~\ref{thm:wsaw})
grows rapidly with $d$, so the small parameter that facilitates the bootstrap argument is
more hidden and more delicate to exploit.
\end{enumerate}

\section*{Acknowledgement}

This work was supported in part by NSERC of Canada.


\end{document}